\newcommand{\mathsym}[1]{{}}
\newcommand{\unicode}[1]{{}}
\newcommand{\R}{\ensuremath{\mathbb{R}}}
\newcommand{\CC}{\mathcal{C}}
\newcommand{\CP}{\ensuremath{\mathcal{P}}}
\newcommand{\ov}{\overline}
\newcommand{\N}{\mathbb{N}}
\newcommand{\s}{\mathbb{S}}
\newcommand{\Tn}{\mathbb{T}_n}
\newcommand{\inte}{\mathrm{int}}
\newcommand{\e}{\varepsilon}
\newcommand{\sgn}{\mathrm{sign}}
\newtheorem {theorem} {Theorem}
\newtheorem {proposition} [theorem]{Proposition}
\newtheorem {corollary}{Corollary}
\newtheorem {lemma}  [theorem]{Lemma}
\newtheorem {remark}{Remark}
\newtheorem {mtheorem} {Theorem}
\newcommand\blfootnote[1]{%
  \begingroup
  \renewcommand\thefootnote{}\footnote{#1}%
  \addtocounter{footnote}{-1}%
  \endgroup
}
\def\R{\mathbb R}
\def \d {\mathrm{d}}
\title[On the boundedness of solutions and existence of invariant tori]
{Invariant tori and boundedness of solutions\\ of non-smooth oscillators with \\ Lebesgue integrable forcing term}
\author[D. D. Novaes and L. V. M. F. Silva]
{Douglas D. Novaes$^*$ and Luan V. M. F. Silva}
\address{Departamento de Matem\'{a}tica - Instituto de Matem\'{a}tica, Estat\'{i}stica e Computa\c{c}\~{a}o Cient\'{i}fica (IMECC) - Universidade
Estadual de Campinas (UNICAMP), \ Rua S\'{e}rgio Buarque de Holanda, 651, Cidade Universit\'{a}ria Zeferino Vaz, 13083-859, Campinas, SP,
Brazil}
\email{ddnovaes@unicamp.br}
\email{luanmattos@ime.unicamp.br}
\begin{document}

\subjclass[2010]{34A36,34C15,34C45,34C11}

\maketitle

\begin{abstract}
Since Littlewood works in the 1960's, the boundedness of solutions of Duffing-type equations $\ddot{x}+g(x)=p(t)$ has been extensively investigated. More recently, some researches have focused on the family of non-smooth forced oscillators $ \ddot{x}+\sgn(x)=p(t)$, mainly because it represents a simple limit scenario of Duffing-type equations for when $g$ is bounded. Here, we provide a simple proof for the boundedness of solutions of the non-smooth forced oscillator in the case that the forcing term $p(t)$ is a $T$-periodic Lebesgue integrable function with vanishing average. We reach this result by constructing a sequence of invariant tori whose union of their interiors covers all the $(t,x,\dot x)$-space, $(t,x,\dot x)\in \s^1\times\R^2$.
\end{abstract}

\noindent {\footnotesize {\it Key words and phrases.} Invariant tori, boundedness of solutions, non-smooth oscillators,\\ Carathéodory differential systems}
\blfootnote{$^*$Corresponding author: Douglas D. Novaes, ddnovaes@unicamp.br} 

\section{Introduction}

In this paper, we are interested in the following family of non-smooth forced oscillators:
\begin{equation}\label{e1}
	\ddot{x}+\sgn(x)= p(t).
\end{equation}
This family has been extensively studied in the research literature. For instance, by assuming that $ p(t) $ is a periodic continuous function, the existence of periodic solutions has been analyzed by Jacquemard and Teixeira in \cite{Jacquemard2012}, while Burra and  Zanolin in \cite{Burra2020} and Sun and Sprott  in \cite{Sprott2010} determined the existence of chaotic behavior of equation \eqref{e1} for particular examples of periodic piecewise constant functions $p(t)$. Dropping the periodicity condition on $p(t)$, the boundedness of solutions of \eqref{e1} has been investigated by Enguiça and Ortega in \cite{Ortega2019}, which showed that it has infinitely many bounded solutions provided that the limit
\[
\lim_{T\to+\infty}\frac{1}{T}\int_{t}^{t+T}p(s)ds
\]
uniformly exists with respect to $ t\in\R $. Notice that this last condition always holds when $p(t)$ is periodic.

The study of non-smooth differential equations provides insight into various phenomena in both natural and engineering domains, where systems exhibit sudden changes. This knowledge has practical applications in various fields, such as modeling mechanical systems that involve collisions (see, for instance, \cite{Brogliato1996,Jeffrey2018,Kowalczyk2008}) and biological and climate modeling that involves abrupt changes (see, for instance,  \cite{Anna2017,Carvalho2020,Leifeld2018,Piltz2014}). Particularly, the differential equation \eqref{e1} provides models of electronic circuit in the presence of a relay as noticed by \cite{Jacquemard2012}.

The equation \eqref{e1} is part of the family of the Duffing-type equations
\begin{equation}\label{int1}
	\ddot{x}+g(x)=p(t),
\end{equation}
for which, in the 1960's, Littlewood proposed  to investigate the boundedness of solutions by assuming that $p(t)$ is either bounded or periodic and that $ g $ (commonly called {\it saturation function}) satisfies a particular asymptotic condition (see \cite{Littlewood1968}). In \cite{Littlewood1966B}, Littlewood gave an example for which $p(t)$ is bounded and \eqref{int1} has an unbounded solution. In this context, Morris \cite{Morris1976} was the first one to provide an example in the conditions proposed by Littlewood for which all the solutions of \eqref{int1} are bounded, namely, $p(t)$ periodic continuous and $ g(x)=2x^{3} $. Soon after, Dieckerhoff and Zehnder \cite{Zehnder1987} extended the Morris result to the following more general family of equations:
\begin{equation*}\label{EqZehnder}
	\ddot{x}+x^{2n+1}+\sum_{j=0}^{2n}x^j p_j(t)=0, \quad n\geq 1,
\end{equation*}
with $p_j$ being periodic $\CC^{\infty} $ functions. The proof of the aforementioned results involves showing that there exist infinitely many invariant closed curves of the time-$T$ map, which in turn is equivalent to the existence of a sequence of nested invariant tori for the differential equation, and for this, the Moser's twisting map theorem is very useful (see \cite{Moser1962}).  Some results concerning the boundedness of solutions for non-smooth differential equations were also obtained by means of the variants of Moser's twisting map theorem (see, for instance, \cite{Kunze1997,Ortega1996}).

As mentioned in \cite{Li2001}, Ortega in a talk \cite{Ortega1998Talk} at Academia Sinica in 1998 suggested the question of whether all solutions of \eqref{int1}, when $ g(x)=\arctan(x) $ and $p(t)$ is periodic, are bounded or not. In this case, the saturation function is bounded and generates a small twist at infinity, which makes it difficult to apply the standard versions of the twisting map theorem. It fell to Li \cite{Li2001} to first answered this question, in the case that $p(t)$ is a $ \CC^{\infty} $ periodic function with vanishing average. In \cite{Wang2006} , Wang improved the result of Liu by considering $p(t)$ as a $ \CC^{5} $ periodic function with some smallness condition on its average. The non-smooth forced oscillator \eqref{e1} represents a limit scenario to the case introduced by Ortega in \cite{Ortega1998Talk}.

The goal of this paper is to provide a simple proof for the boundedness of every solutions of the differential equation \eqref{e1} in the case that $p(t)$ is a Lebesgue integrable periodic function with vanishing average. Despite of the vanishing average restriction, equation \eqref{e1} still presents a rich dynamics, for instance, periodic solutions \cite{Jacquemard2012} and chaotic behaviour \cite{Burra2020,Sprott2010}. Our reasoning is based on a simple constructive approach that allows us to prove the existence of a sequence of invariant tori such that the union of their interiors covers all the $(t,x,\dot x)$-space, $(t,x,\dot x)\in \s^1\times\R^2$. In addition, we will see that these tori are foliated by periodic solutions.

The paper is structured as follows. In Section \ref{MainResult}, we define some objects to be used throughout this work and we state our main result (Theorem \ref{ta}) concerning the existence of infinitely many invariant tori. Section \ref{fundlemma} is dedicated to provide a sufficient condition for the existence of an invariant torus of \eqref{s1}, while Section \ref{existence} is devoted to the proof of our main result.

\section{Statement of the main result} \label{MainResult}

In order to address the properties of the solutions of the differential equation \eqref{e1}, it is convenient to consider it written as the following first-order autonomous differential system in the extended phase space, by taking $ y=x' $, $t=\phi$ and $\phi'=1$
\begin{equation}\label{s1}
\begin{cases}
\phi'=1,\\
x'=y,\\
y'=-\sgn(x)+ p(\phi),
\end{cases}
\end{equation}
 $(\phi,x,y)\in \R\times\R^{2} $.  In addition, since $p(\phi)$ is $T$-periodic, the differential system \eqref{s1} can be considered in the quotient space  $(\phi,x,y)\in \s^1\times\R^{2} $, where $\s^{1} =\R/T\mathbb{Z}$. In this way, an invariant torus of \eqref{s1} in the quotient space $\s^{1}\times\R^{2}$  corresponds to an invariant cylinder of \eqref{s1} in the extended phase space $\R\times\R^{2}$, such that its intersections with the ``time'' sections $\Lambda_{0}:=\{0\}\times\R^2$ and $\Lambda_{T}:=\{T\}\times\R^2$ coincide in the quotient space.

 \subsection{Existence and uniqueness of solutions}\label{sec:eus}
 
The differential system \eqref{s1} has two kinds of discontinuities, namely, the ones generated by  the $\sgn$ function and the ones possibly generated by the Lebesgue integrable function $p(t)$. Before stating our main result, some comments about  the existence and uniqueness of solution of \eqref{e2} are necessary.

By taking $ y=x' $, the differential equation \eqref{e1} can be written as the following first-order differential system:
\begin{equation}\label{e2}
\begin{cases}
x'=y,\\
y'=-\sgn(x)+ p(t).
\end{cases}
\end{equation}
 Filippov convention will be assumed for solutions of the differential system \eqref{e2} (see \cite[\S 7]{Filippov1988}), which exist for every initial conditions (see \cite[Theorem 8]{Filippov1988}). The differential system \eqref{e2} will be referred by Filippov system. As usual, solutions of the differential system \eqref{s1} are given in terms of solutions of the Filippov system \eqref{e2} and, therefore, also exist for every initial conditions.

The solutions of \eqref{e2} can be investigated by considering the following differential systems:
\begin{equation}\label{e2lateral}
\begin{cases}
x'=y,\\
y'=-1+ p(t),
\end{cases} x\geq 0, \quad\text{and}\qquad
 \begin{cases}
x'=y,\\
y'=1+ p(t),
\end{cases} x\leq0,
\end{equation}
which match  \eqref{e2} restricted to $x\geq0$ and $x\leq0$, respectively. Since $p(t)$ is a Lebesgue integrable function,   the differential systems in \eqref{e2lateral} correspond to Carathéodory differential systems  for which all the conditions for the existence and uniqueness of solutions are satisfied  (see \cite[Chapter 18]{Kurzweil2014} for more details).
In the extended phase space, such differential systems become:
\begin{equation}\label{s1lateral}
\begin{cases}
\phi'=1,\\
x'=y,\\
y'=-1+ p(\phi),
\end{cases} x\geq 0, \quad\text{and}\qquad
 \begin{cases}
 \phi'=1,\\
x'=y,\\
y'=1+ p(\phi).
\end{cases}x\leq 0.
\end{equation}
Due to the presence of the $ \sgn $ function, the switching plane $ \Sigma=\{(\phi,x,y)\in\R\times\R^{2}:x=0\} $ plays  an important role in describing the  dynamics of \eqref{s1}.
 Indeed, one can see that, for each initial condition $(\phi_0,0,y_0)\in\Sigma$ with $y_0\neq0$, the unique maximal solutions of the differential systems in \eqref{s1lateral} are transversal to $\Sigma$ at $(\phi_0,0,y_0)$ and concatenate in order  to  form a (local) solution of the differential system \eqref{s1} that is unique around $(\phi_0,0,y_0)$.  Such a solution could eventually lose its uniqueness property if it reaches $\Sigma$ tangentially, that is, at a point  $(\phi_1,0,0)$. However, as long as a maximal solution of the differential system  \eqref{s1} is transversal to $\Sigma$, it is given by the concatenation of solutions of the differential systems in \eqref{s1lateral} providing, then, the uniqueness of such a maximal solution, which has the whole line as its interval of definition. The explicit expressions of the solutions of the differential systems in \eqref{s1lateral}  will be provided below in Section \ref{fundlemma}.

 \subsection{Main result}
 
 Let us define
\begin{equation*}\label{primitives}
	  P_1(t):=\int_{0}^{t}p(s)\d s \quad  \text{and} \quad P_2(t):=\int_{0}^{t}P_1(s)\d s,
\end{equation*} 
and, as usual, let $\overline{p}$ denote the average of $p(t)$, i.e.
\[
\overline{p}:=\frac{1}{T}\int_{0}^{T}p(s)\d s=\dfrac{P_1(T)}{T}.
\]
Notice that the function $P_1(t)$ is continuous and the function $P_2(t)$ is continuously differentiable.

Our main result provides the existence of a sequence of invariant tori for \eqref{s1} under the assumption $\ov p=0$.

\begin{mtheorem}\label{ta}
Suppose that $p(t)$ is a Lebesgue integrable $T$-periodic function satisfying $\ov p=0$. Then, there exists a sequence $\Tn\subset\mathbb{S}^1\times\R^2$ of nested invariant tori of the differential system \eqref{s1}  foliated by periodic solutions and satisfying:
\[
\mathbb{S}^1\times\R^2=\bigcup_{n\in\N} \inte(\Tn),
\]
where $ \inte(\Tn) $ denotes the open region enclosed by $\Tn$.
In addition, all the maximal solutions of \eqref{s1} are defined for every $t\in\R$ and the ones starting at $(\mathbb{S}^1\times\R^2)\setminus\inte(\mathbb{T}_1)$ are unique and transversal to $\Sigma$.
\end{mtheorem}

Since $ \Tn $ is invariant for each $ n\in\N$, the uniqueness property provided by Theorem \ref{ta} implies that a solution starting at $ \ov{ \mbox{int}(\Tn)}$ cannot leave it for all $ t\in\R $. This leads us to the following corollary.

\begin{corollary}\label{c1}
Suppose that $p(t)$ is a Lebesgue integrable $T$-periodic function satisfying  $\ov p=0$.  Then, all the solutions of the differential system \eqref{e2} are bounded. 
\end{corollary}

\section{Fundamental Lemma}\label{fundlemma}

This section is devoted to provide a sufficient condition for the existence of invariant tori of \eqref{s1}.

For each $n\in \N$, define the functions $y_n^+:[0,T]\to\R$ and $y_n^-:[0,T]\to\R$ by
 \begin{equation}\label{eq:yn}
 	y_n^{\pm}(\phi_0)=\pm\frac{nT}{2}+P_1(\phi_0)-\frac{P_2(T)}{T}
 \end{equation}
and, for each $n\in\N,$ such that $y_n^-(\phi_0)<y_n^+(\phi_0)$ for every $\phi_0\in[0,T],$ define the surface
\begin{equation}\label{inv.cylinders}
\begin{aligned}
	&\mathcal{T}_n:=	\mathcal{T}^{+}_n\cup	\mathcal{T}^{-}_n,\,\,\text{ where}\\
&\mathcal{T}^{\pm}_n:=\{(\phi_0,\Psi^{\pm}_n(\phi_0,y_0),y_0): \phi_0\in\R, \; y_0\in[y^{-}_n(\phi_0),y^{+}_n(\phi_0)] \},
\end{aligned}
\end{equation}
and
	\begin{align*}
		\Psi^{\pm}_n(\phi_0& ,y_0):=\frac{1}{8}\left(\pm n^{2} T^{2} \mp4 y_0^{2}-8P_2\left(\frac{n
			T}{2}\pm y_0\mp P_1(\phi_0)\pm \frac{P_2(T)}{T}+\phi_0\right)\right.\\
		&\hspace{1.5cm}+
\left.	 4 P_2(T) \left(n\pm\frac{P_2(T)}{T^{2}}\right)-4
		P_1(\phi_0) (\pm P_1(\phi_0)\mp2 y_0)+
		8 P_2(\phi_0)\right).
	\end{align*}

The following result provides sufficient conditions for which the surface $ \mathcal{T}_n$, for some $n\in\N$, corresponds to an invariant torus of  \eqref{s1}.
\begin{lemma}[Fundamental Lemma]\label{fundamentallemma} Let $n\in\N$ be fixed. Assume that, for every $\phi_0\in[0,T]$, 
\begin{equation}\label{cond1}
\big|T P_1(\phi_0)-P_2(T)\big|<\dfrac{n T^2}{2} 
\end{equation}
and
\begin{equation}\label{cond2}
\big|t P_2(T)+T P_2(\phi_0)- T P_2(t+\phi_0)\big|<\dfrac{T}{2}t(nT-t),\,\, t\in(0,n T).
\end{equation}
Then, $\mathcal{T}_n$ is an invariant torus of the differential system \eqref{s1}. Moreover, $ \mathcal{T}_n $ is foliated by $2nT$-periodic solutions.
\end{lemma}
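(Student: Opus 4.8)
The plan is to realise $\mathcal{T}_n$ as a union of periodic orbits obtained by concatenating arcs of the two Carathéodory systems \eqref{s1lateral} across the switching plane $\Sigma$, with no fixed-point or twist-map input. First I would record the explicit solutions of \eqref{s1lateral}: from $(\phi_0,x_0,y_0)$ at time $0$, the first (``$x\ge 0$'') system gives
\[
\phi(t)=\phi_0+t,\qquad y(t)=y_0-t+P_1(\phi_0+t)-P_1(\phi_0),
\]
\[
x(t)=x_0+y_0\,t-\tfrac{t^{2}}{2}+P_2(\phi_0+t)-P_2(\phi_0)-P_1(\phi_0)\,t,
\]
and the second (``$x\le 0$'') system the same with $-\tfrac{t^{2}}{2}$ replaced by $+\tfrac{t^{2}}{2}$ and $-t$ by $+t$ in $y(t)$. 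I will freely use that $\ov p=0$ — the standing hypothesis, which in fact \eqref{cond2} forces in the limit $t\to nT^{-}$ — so that $P_1$ is $T$-periodic and $P_2(\phi+nT)=P_2(\phi)+nP_2(T)$.

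The heart of the argument is a single excursion estimate. Fix $\phi_*\in\R$; by \eqref{cond1}, $y_n^-(\phi_*)<0<y_n^+(\phi_*)$. Run the first system from $(\phi_*,0,y_n^+(\phi_*))\in\Sigma$. Inserting $y_n^+(\phi_*)=\tfrac{nT}{2}+P_1(\phi_*)-\tfrac{P_2(T)}{T}$ into the formulas and simplifying, one finds
\[
x(t)=\frac1T\Big(\tfrac{T}{2}\,t(nT-t)-\big[t\,P_2(T)+T\,P_2(\phi_*)-T\,P_2(\phi_*+t)\big]\Big),
\]
which by the upper inequality contained in \eqref{cond2} is strictly positive for $t\in(0,nT)$, while the periodicity relations give $x(nT)=0$ and $y(nT)=y_n^-(\phi_*)=y_n^-(\phi_*+nT)$. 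Hence this arc is a genuine solution of \eqref{s1} that leaves $\Sigma$ transversally, stays in $\{x>0\}$, and returns to $\Sigma$ exactly at time $nT$, at $(\phi_*+nT,0,y_n^-(\phi_*+nT))$. The symmetric computation for the second system started at $(\phi_*,0,y_n^-(\phi_*))$ gives $x(t)<0$ on $(0,nT)$ — now by the lower inequality in \eqref{cond2} — and return to $\Sigma$ at time $nT$ at $(\phi_*+nT,0,y_n^+(\phi_*+nT))$.

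Concatenating, for each $\phi_*$ the ``$x\ge 0$'' arc from $(\phi_*,0,y_n^+(\phi_*))$ followed by the ``$x\le 0$'' arc from its endpoint yields, via the gluing convention described after \eqref{s1lateral}, a solution $\gamma_{\phi_*}$ of \eqref{s1} that at time $2nT$ is back at $(\phi_*,0,y_n^+(\phi_*))$ in $\s^1\times\R^2$; since $\gamma_{\phi_*}$ crosses $\Sigma$ into $\{x>0\}$ only at times $\equiv 0\pmod{2nT}$, its minimal period is exactly $2nT$. Two such orbits coincide when $\phi_*\equiv\phi_*'\pmod T$ and are disjoint otherwise, because $\gamma_{\phi_*}$ meets $\Sigma$ only at $\phi$-coordinate $\phi_*\bmod T$; thus $\{\gamma_{\phi_*}\}_{\phi_*\in[0,T)}$ is a pairwise disjoint family of embedded circles. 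It then remains to identify $\bigcup_{\phi_*}\gamma_{\phi_*}$ with $\mathcal{T}_n$: setting $s:=\tfrac{nT}{2}+P_1(\phi_0)-\tfrac{P_2(T)}{T}-y_0\in[0,nT]$, one checks that $\Psi_n^+(\phi_0,y_0)$ is precisely the $x$-coordinate, at time $s$, of the ``$x\ge 0$'' arc through $(\phi_0-s,0,y_n^+(\phi_0-s))$, and symmetrically for $\Psi_n^-$; in particular $\Psi_n^+>0$ and $\Psi_n^-<0$ on the open strip $y_0\in(y_n^-(\phi_0),y_n^+(\phi_0))$, and both vanish at $y_0=y_n^\pm(\phi_0)$. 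Consequently each slice $\mathcal{T}_n\cap(\{\phi_0\}\times\R^2)$ is a Jordan curve, these curves vary continuously with $\phi_0\in\s^1$, their union is an embedded torus, and $\inte(\mathcal{T}_n)$ is the solid region it bounds; the family $\{\gamma_{\phi_*}\}$ foliates it by $2nT$-periodic orbits.

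The main obstacle is the computational one: verifying that the ``$x\ge 0$'' arc from $(\phi_*,0,y_n^+(\phi_*))$ returns to $\Sigma$ after exactly time $nT$ and that its $x$-coordinate reduces to the displayed expression — and, simultaneously, recognising that the two-sided inequality \eqref{cond2} says exactly that the ``$x\ge 0$'' arc stays in $\{x>0\}$ and the ``$x\le 0$'' arc in $\{x<0\}$ throughout the excursion. This is elementary bookkeeping with $P_1$, $P_2$ and the periodicity relations; there are no analytic subtleties, which is the whole point of the advertised ``simple proof.''
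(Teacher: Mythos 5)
Your proposal is correct and follows essentially the same route as the paper: explicit integration of the two Carathéodory systems, condition \eqref{cond1} giving transversality of $\gamma_n^{\pm}$ to $\Sigma$, condition \eqref{cond2} being exactly the sign condition on the excursion $x(t)=\tfrac{1}{T}\bigl(\tfrac{T}{2}t(nT-t)-f(t)\bigr)$, the return to $\Sigma$ at time $nT$ onto $\gamma_n^{\mp}$, and the identification of the swept surface with the graph surface $\mathcal{T}_n$ via the substitution $s=y_n^{+}(\phi_0)-y_0$ (which is precisely the paper's inversion of the system \eqref{sistema}). Your observation that \eqref{cond2} already forces $\overline{p}=0$ in the limit $t\to nT^{-}$ is a nice bonus that justifies the paper's otherwise tacit use of $P_1(T)=0$ inside the lemma's proof.
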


Before we proceed with the proof of Lemma \ref{fundamentallemma}, we define some important objects to help us along the process. Consider the following sequence of curves in the plane $ \Sigma$:
\begin{equation}\label{gammapn}
	\gamma^{+}_n= \{(\phi_0,0,y_n^{+}(\phi_0)):\phi_0\in\mathbb{S}^{1}\}\,\, \text{and} \,\, \gamma^{-}_n= \{(\phi_0,0,y_n^{-}(\phi_0)):\phi_0\in\mathbb{S}^{1}\},
\end{equation}
where $y^{+}_n$ and $y^{-}_n$ are the continuous functions defined in~\eqref{eq:yn}. 

We observe that the solutions of the  differential systems in \eqref{s1lateral} are given, respectively, by 
\begin{equation*}\label{solp}
	\begin{aligned}
		\varphi^{+}(t;\phi_0,x_0,y_0)=\Big(&t + \phi_0, -\frac{t^{2}}{2} +x_0 + t y_0 - t P_1(\phi_0) - P_2(\phi_0) + 
		P_2(t + \phi_0) ,\\
		& -t + y_0 - P_1(\phi_0 )+ P_1(t + \phi_0)\Big)
	\end{aligned}
\end{equation*}
and
\begin{equation*}\label{soln}
	\begin{aligned}
		\varphi^{-}(t;\phi_0,x_0,y_0)=\Big(&t + \phi_0,\frac{t^{2}}{2}+x_0 + t y_0 - t P_1(\phi_0) - P_2(\phi_0) + 
		P_2(t + \phi_0)  ,\\
		& t + y_0 - P_1(\phi_0 )+ P_1(t + \phi_0)\Big),
	\end{aligned}
\end{equation*}
which are obtained via  direct  integration. As mentioned in Section \ref{sec:eus}, if a solution $\varphi(t)$ of the differential system \eqref{s1} is transversal to $\Sigma$, then it writes as concatenations of $\varphi^{+}$ and $\varphi^{-}$.

 Also, the next remark concerning the primitives $ P_1 $ and $ P_2 $ plays an important role throughout this work.
\begin{remark}\label{identities}
	The following identities hold for every $ t\in\R $ and $ n\in\N $:
	\begin{equation*}\label{id1}
		P_1(t+nT)=P_1(t)+n P_1(T) 
	\end{equation*}
	and
	\begin{equation*}\label{id2}
		P_2(t+nT)=P_2(t)+n P_1(T)t+\dfrac{n^{2}-n}{2}T P_1(T)+n P_2(T).
	\end{equation*}
	In particular, if $p(t)$ has vanishing average, then $ P_1 $ is $ T$-periodic.
\end{remark}

\subsection{Proof of the Fundamental Lemma}	
Let us first prove that $ \varphi^{\pm} $ takes the curves $ \gamma^{\pm}_n$ into $ \gamma^{\mp}_n $ (see Figure \ref{fig1}), that is
\begin{equation}\label{rel1}
\varphi^{\pm}(t;\phi_0,0,y^{\pm}_n(\phi_0)) \notin \Sigma\,\,\, \forall t\in(0,nT)
\end{equation}
and
\begin{equation}\label{rel2}
\varphi^{\pm}(nT;\phi_0,0,y^{\pm}_n(\phi_0))=(nT+\phi_0,0,y^{\mp}_n(\phi_0)).
\end{equation}
Indeed, condition \eqref{cond1} implies that
	\[
		y^{+}_{n}(\phi_0)>0 \;\;\; \text{and}\;\;\; 	y^{-}_{n}(\phi_0)<0,
	\]
which means that the points in $\gamma^{+}_n$ and $ \gamma^{-}_n$ follows the forward flows $ \varphi^{+} $ and $ \varphi^{-} $, respectively. From now on, $ \varphi_i^{\pm} $, $ i=1,2,3, $ denotes the $ i $-th coordinate of the function $ \varphi^{\pm}$, thus condition \eqref{cond2} implies that
	\[
 \varphi^{+}_2(t;\phi_0,0,y^{+}_n(\phi_0))>0\;\;\; \text{and}\;\;\;  \varphi^{-}_2(t;\phi_0,0,y^{-}_n(\phi_0))<0,
	\]
for every $ t\in(0,nT) $, which provides that relationship \eqref{rel1} holds. Moreover, taking Remark \ref{identities} into account, we have
\begin{align*}
	\varphi^{\pm}_2 (nT;\phi_0,0,y^{\pm}_n(\phi_0))&=\mp\dfrac{(nT)^{2}}{2} + nT\; y^{\pm}_n(\phi_0) - nT \;P_1(\phi_0) - P_2(\phi_0) + 	P_2(nT + \phi_0)\\
	&=P_2(nT + \phi_0)- P_2(\phi_0)-n P_2(T)\\
	&=P_1(T)(n(\phi_0+\dfrac{n-1}{2})	)\\
	&=0, 
\end{align*}
because $ P_1(T)=0 $. Also,
\[
\begin{aligned}
	\varphi^{\pm}_3 (nT;\phi_0,0,y^{\pm}_n(\phi_0))&=\mp nT +y^{\pm}_n(\phi_0) -P_1(\phi_0)+P_1(nT + \phi_0)\\
	&=\mp nT \pm\dfrac{nT}{2} +P_1(\phi_0) - \dfrac{P_2(T)}{T} -P_1(\phi_0)+P_1(nT + \phi_0)\\
	&=\mp\dfrac{nT}{2} +P_1(\phi_0) - \dfrac{	P_2(T)}{T} \\
	&=y^{\mp}_n(\phi_0).
\end{aligned}
\]
Hence, relationship \eqref{rel2} holds.

\begin{figure}[h]
	\begin{center}
	\begin{overpic}[scale=0.4]{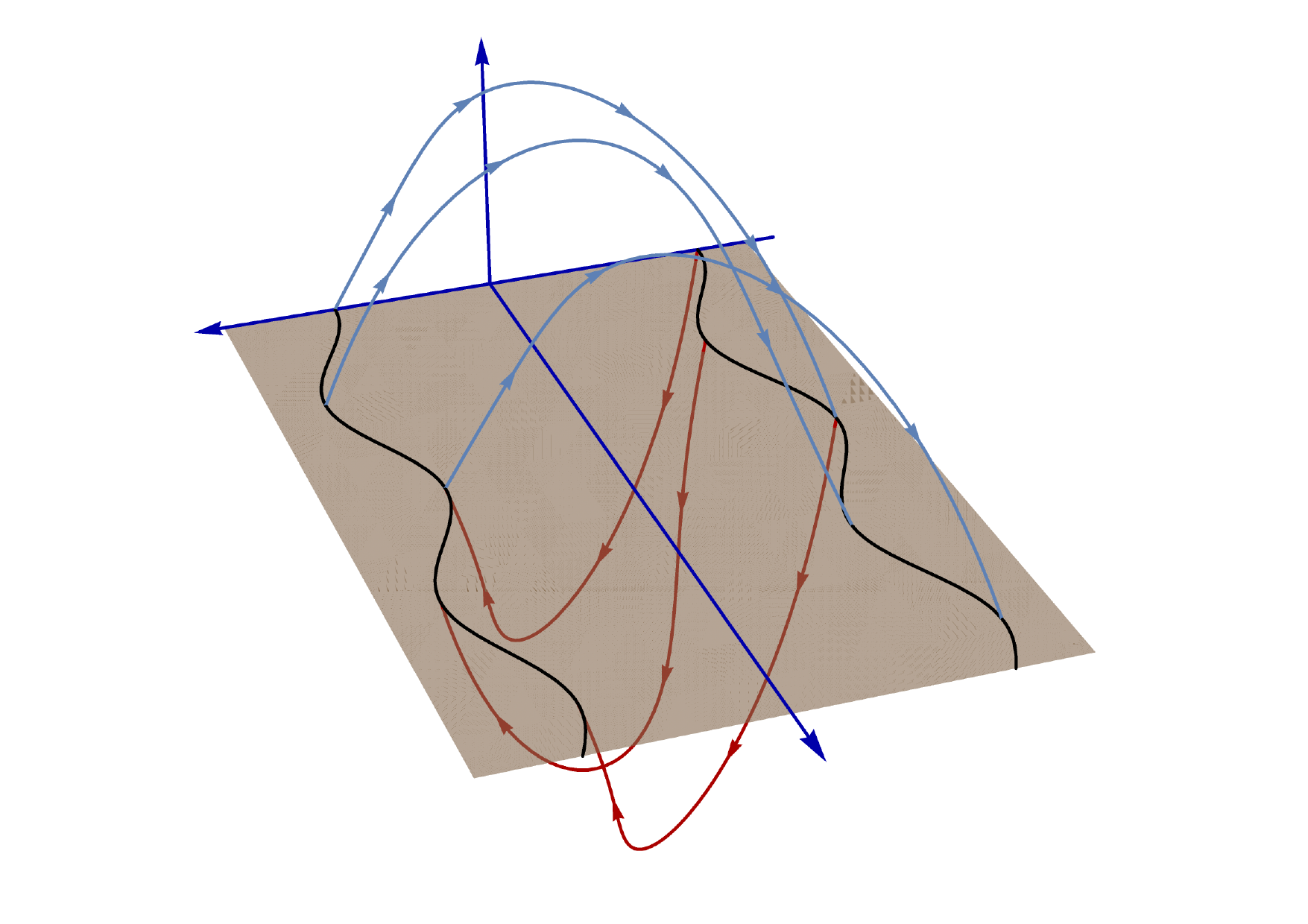}
%			\begin{overpic}[grid,tics=5,scale=0.45]{FIGA}
			\put(27,17){{ $ \Sigma $}}
			\put(44.5,16){$ \gamma^{+}_{n} $}
			\put(76,16){$ \gamma^{-}_{n} $}
			\put(40,67){{ $ \varphi^+ $}}
						\put(47,2){{ $ \varphi^- $}}
			\put(63.5,9){$ \phi $}
			\put(35.5,69.5){$ x $}
			\put(11.5,44.5){$ y $}
		\end{overpic}		
	\end{center}
	\caption{For $ n\in\N $ satisfying \eqref{cond1} and \eqref{cond2} , the flow $ \varphi^{\pm} $ takes $ \gamma^{\pm}_n $ into $ \gamma^{\mp}_n $ .}
	\label{fig1}
	\smallskip
\end{figure}

Accordingly, we notice that
\begin{equation}\label{inv.surfaces}
	\begin{aligned}
		&\mathcal{S}_n:=	\mathcal{S}^{+}_n\cup	\mathcal{S}^{-}_n,\,\,\text{ where}\\
		&\mathcal{S}^{\pm}_n:=\{\varphi^{\pm}(t;\phi_0,0,y^{\pm}_n(\phi_0)): t\in[0,nT], \; \phi_0\in\R\},
	\end{aligned}
\end{equation}
is an invariant surface of \eqref{s1} whose intersections with $ \Sigma $ correspond to  the curves $\gamma^{+}_n$ and $\gamma^{-}_n$ given by \eqref{gammapn} (see Figure \ref{fig1}). Now, by solving the system of equations
\begin{equation}\label{sistema}
	\begin{aligned}
		\varphi^{\pm}_1(t;\phi,0,y^{\pm}_n(\phi))&=\phi_0,\\
		\varphi^{\pm}_3(t;\phi,0,y^{\pm}_n(\phi))&=y_0,
	\end{aligned}
\end{equation}
in the variables $ (t,\phi) $ and, then, substituting the solutions into $\varphi^{\pm}_2(t;\phi,0,y^{\pm}_n(\phi))$, we get that $ \mathcal{S}_n= \mathcal{T}_n$. Furthermore, $ \mathcal{T}_n^{+}=\mathcal{T}_n\cap \{x\geq0\}$ and $\mathcal{T}_n^{-}=\mathcal{T}_n\cap \{x\leq0\}$ are homeomorphic to 
$$ \mathcal{D}_n:= \{(\phi_0,y_0):\phi_0\in\R, \;  y_0\in[y^{-}_n(\phi_0),y^{+}_n(\phi_0)]\},$$ 
because they are graphs of $ \Psi^{+}_n $ and $ \Psi^{-}_n $, respectively. This in turn implies that $ \mathcal{T}_n $ is a simply connected surface and, consequently, $\mathcal{T}_n$ is an  invariant  cylinder of \eqref{s1}.

Now, let $U\subset \Lambda_0$ be the set of initial conditions in $\Lambda_0$ for which the corresponding maximal solutions of \eqref{s1} are transversal to $\Sigma$. As mentioned before, such solutions are unique and defined for every $t\in\R$. Thus, consider the time-$ T $-map $\CP_{T}$ defined on $U$ into $\Lambda_{T}$:
\begin{equation*}\label{poincare}
	\begin{array}{rccc}
		\CP_{T}: &U&\longrightarrow&	\Lambda_{T}\\
	&(0,x_0,y_0)&\longmapsto &\varphi(T;0,x_0,y_0).
	\end{array} 
\end{equation*}
Since $\Lambda_{0}$ and $\Lambda_{T}$ coincide in the quotient space $ \s^{1}\times\R^{2}$ and taking into account that $\CP_{T}(U)\subset \Lambda_T$ corresponds to the set of initial  conditions in $\Lambda_T$  for which the maximal solutions of \eqref{s1} are transversal to $\Sigma$, it follows that $\CP_{T}$ can be seen as an automorphism on $U$.

Let $\CC_n^{0}$ and $\CC_n^{T}$ denote the intersections between the invariant cylinder $\mathcal{T}_n$ with the time sections $ \Lambda_{0} $ and $ \Lambda_{T} $, respectively. Notice that, from the considerations above, $\CC_n^{0}\subset U$. In addition, 
\begin{equation}\label{C0}
	\begin{aligned}
		\CC_n^{0}:=&\mathcal{T}_n\cap\Lambda_{0}=\CC^{+}_{n,0}\cup\CC^{-}_{n,0},\,\,\text{ where}\\
		\CC^{\pm}_{n,0}:=&\{(0,\Psi^{\pm}_n(0,y_0),y_0):y_0\in[y^{-}_n(0),y^{+}_n(0)] \}
	\end{aligned}
\end{equation}
and
\begin{equation}\label{CT}
	\begin{aligned}
		\CC_n^{T}:=&\mathcal{T}_n\cap\Lambda_{T}=\CC^{+}_{n,T}\cup\CC^{-}_{n,T},\,\, \text{ where}\\
		\CC^{\pm}_{n,T}:=&\{(T,\Psi^{\pm}_n(T,y_0),y_0):y_0\in[y^{-}_n(T),y^{+}_n(T)] \}.	
	\end{aligned}
\end{equation}

 In what follows, we show  that $ \CC_n^{0} $ is  invariant under the map $  \CP_{T} $. For this purpose, we will examine the parametrizations of $\CC_n^{0}$ and $\CC_n^{T}$ given by \eqref{C0} and \eqref{CT}, respectively. We first observe that the functions $y^{+}_n$ and $y^{-}_n$ are $ T $-periodic, thus $ \Psi^{\pm}_n(0,\cdot) $ and $ \Psi^{\pm}_n(T,\cdot)  $  have the same domain, namely, $\mathcal{I}_n=[y^{-}_n(0),y^{+}_n(0)]$. Then, it is sufficient to show that
	\[
	\Psi^{\pm}_n(0,y_0)-\Psi^{\pm}_n(T,y_0)=0  \quad \mbox{for all} \quad y_0\in \mathcal{I}_n.
	\]
	In fact,
	\[
	\begin{aligned}
		\Psi^{\pm}_n(0,y_0)-\Psi^{\pm}_n(T,y_0)= &n\dfrac{P_2(T)}{2} -P_2\left(\dfrac{nT}{2}\pm\dfrac{P_2(T)}{T}\pm y_0\right)\\
		&+\left(
		P_2\left(T+\dfrac{n
			T}{2} \pm\dfrac{P_2(T)}{T
		}\pm y_0 \right)-(n+2)\dfrac{P_2(T)}{2}
		\right)\\
		=&n\dfrac{P_2(T)}{2}+P_2(T)-(n+2)\dfrac{P_2(T)}{2}\\
		=&0.
	\end{aligned}
	\]
	The second equality above was obtained by the identity given in Remark \ref{identities}. Hence, it follows that the invariant cylinder $\mathcal{T}_n$ corresponds to an invariant torus of \eqref{s1} in the quotient space $ \s^{1}\times\R^{2}$ (see Figure \ref{fig3}), which concludes this proof.
	
	\begin{figure}[h]
		\begin{center}
	\begin{overpic}[scale=0.4]{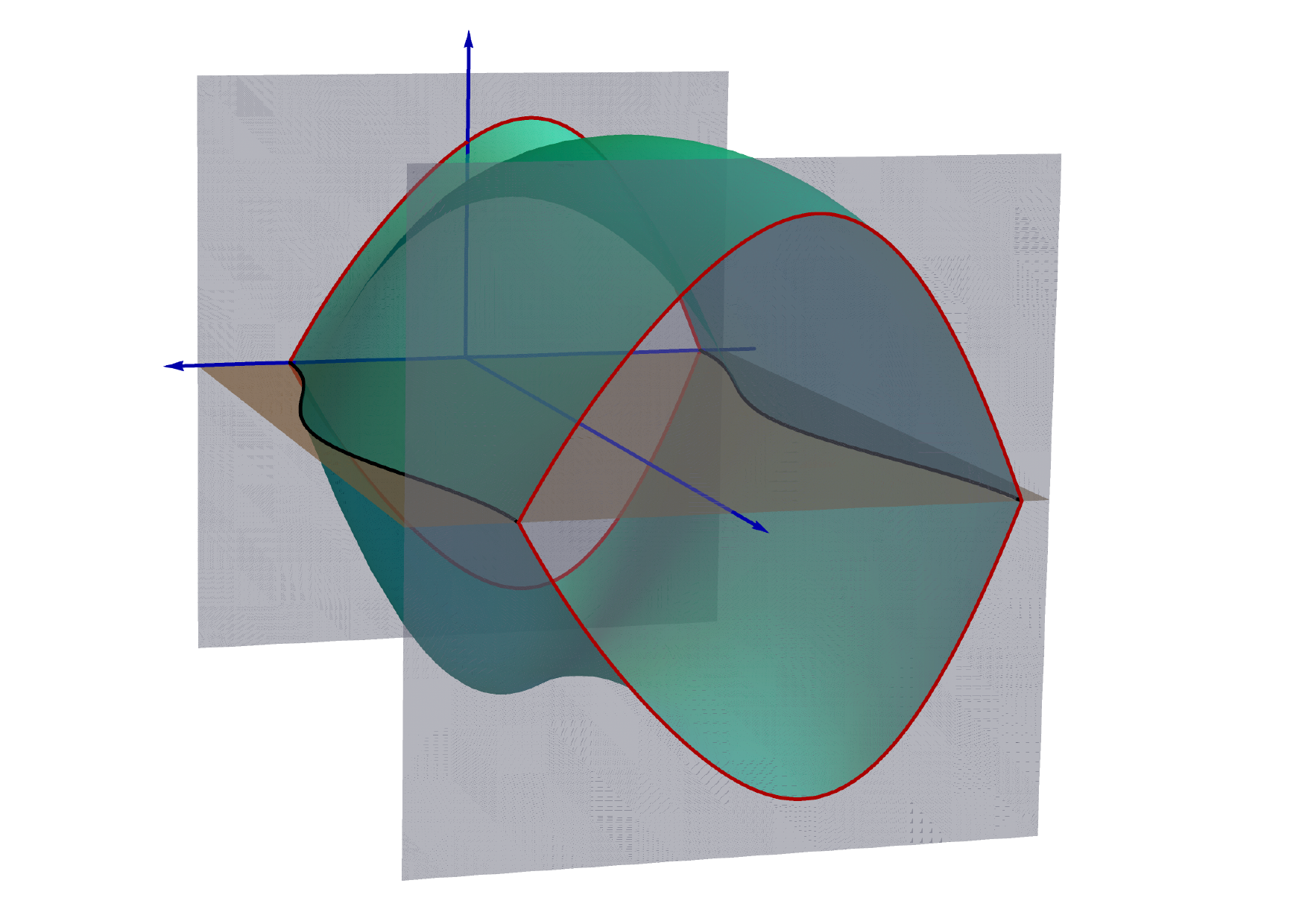}
				%	\begin{overpic}[grid,tics=5,scale=0.4]{FIGB}
				\put(16,37){ $ \Sigma $}
				\put(12,17){ $ \Lambda_0 $}
				\put(28,-1){ $ \Lambda_T $}
				\put(27,18){ $ \mathcal{T}_n^{-} $}
				\put(55,60){ $ \mathcal{T}_{n} ^{+}$}
				\put(68.5,13){ $ \mathcal{C}_{n}^T $}
				\put(24,52.5){$\mathcal{C}_n^0$}
				\put(34.5,69.5){$x$}
				\put(9,42){$y$}
				\put(59,27){$\phi$}
			\end{overpic}		
		\end{center}
		\caption{Invariant torus $\mathcal{T}_n=\mathcal{T}_n^+\cup \mathcal{T}_n^-$ provided by the Fundamental Lemma.}
		\label{fig3}
		\smallskip
	\end{figure}

\section{Proof of Theorem \ref{ta}}\label{existence}

The proof of Theorem \ref{ta} will follow as a consequence of the next results. The first one, Proposition \ref{fundprop}, will provide the existence of $ n^{*}\in\N $ such that the conditions \eqref{cond1} and \eqref{cond2} of the Fundamental Lemma \ref{fundamentallemma} are satisfied for every $ n\geq n^{*}$. Accordingly, the sequence of invariant tori stated by Theorem \ref{ta} will be given by $ \Tn:=\mathcal{T}_{n+n^{*} }$, $n\in\N$. Finally, Corollary \ref{cor:trans-unique} will provide that each maximal solution of the differential system \eqref{s1} is defined for  every $t\in\R$ and the ones starting in $ (\mathbb{S}^1\times\R^2)\setminus\inte(\mathbb{T}_1)$ are transversal to $\Sigma$ and, consequently, unique.

\begin{proposition}\label{fundprop}
	Let $ p (t)$ be a Lebesgue integrable $ T $-periodic function such that $\ov p=0$. Then, there exists $ n^{*} \in\N$ such that $ \mathcal{T}_n $ is an invariant torus of \eqref{s1} for every $ n \geq n^{*} $.
\end{proposition}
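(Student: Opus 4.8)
The plan is to verify directly that the two hypotheses \eqref{cond1} and \eqref{cond2} of the Fundamental Lemma \ref{fundamentallemma} hold simultaneously for all $n$ larger than an explicit threshold $n^{*}$, and then invoke that lemma. The structural fact driving everything is the hypothesis $\ov p=0$, i.e. $P_1(T)=0$: by Remark \ref{identities} this makes $P_1$ a continuous $T$-periodic function, and it collapses the identity for $P_2$ to $P_2(t+nT)=P_2(t)+nP_2(T)$. Hence the function $Q(t):=P_2(t)-\frac{P_2(T)}{T}\,t$ is of class $\CC^1$ and $T$-periodic. Set $M_1:=\max_{[0,T]}|P_1|$ and $L:=\max_{[0,T]}\big|P_1-\tfrac{P_2(T)}{T}\big|\le M_1+\tfrac{|P_2(T)|}{T}$, so that $|Q'|\le L$ on $\R$. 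I will show both conditions hold whenever $n>4L/T$.

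Condition \eqref{cond1} is immediate, since $|TP_1(\phi_0)-P_2(T)|\le TM_1+|P_2(T)|=TL<\tfrac{nT^2}{2}$ as soon as $n>2L/T$. For \eqref{cond2}, the first step is the algebraic identity obtained by substituting $P_2(s)=Q(s)+\tfrac{P_2(T)}{T}s$ and cancelling:
\[
t\,P_2(T)+T\,P_2(\phi_0)-T\,P_2(t+\phi_0)=T\big(Q(\phi_0)-Q(t+\phi_0)\big).
\]
Fixing $\phi_0\in[0,T]$ and writing $g(t):=Q(\phi_0)-Q(t+\phi_0)$, we have $g\in\CC^1$ with $|g'|\le L$, and $g(0)=g(nT)=0$ because $Q$ is $T$-periodic. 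The mean value theorem then gives $|g(t)|\le L\,t$ and $|g(t)|=|g(t)-g(nT)|\le L\,(nT-t)$, hence $|g(t)|\le L\min\{t,\,nT-t\}$ for $t\in[0,nT]$.

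Now \eqref{cond2} asks for $|g(t)|<\tfrac12 t(nT-t)$ on $(0,nT)$. On $(0,nT/2]$ we have $nT-t\ge nT/2$, so $|g(t)|\le Lt$ and, dividing the target inequality by $t>0$, it suffices that $L<\tfrac12(nT-t)$, which follows from $n>4L/T$. The interval $[nT/2,nT)$ is symmetric, using $|g(t)|\le L(nT-t)$ and dividing by $nT-t>0$. Since moreover $y_n^{+}(\phi_0)-y_n^{-}(\phi_0)=nT>0$ for all $n\ge 1$, the surface $\mathcal{T}_n$ is well defined, and so the Fundamental Lemma \ref{fundamentallemma} yields that $\mathcal{T}_n$ is an invariant torus of \eqref{s1} for every $n\ge n^{*}$, where $n^{*}\in\N$ is chosen with $n^{*}>4L/T$.

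The one delicate point — the main obstacle — is the behaviour of \eqref{cond2} near the endpoints $t=0$ and $t=nT$, where both sides vanish and a crude uniform bound such as $|g|\le 2\max_{[0,T]}|Q|$ is useless. The remedy is precisely the ``linear at the endpoints'' estimate $|g(t)|\le L\min\{t,nT-t\}$, which comes from $g(0)=g(nT)=0$ together with the Lipschitz bound on $g$, and which matches the quadratic vanishing of $\tfrac12 t(nT-t)$; splitting at $t=nT/2$ then closes the argument. Everything else is routine bookkeeping with $P_1$, $P_2$ and the periodicity identities of Remark \ref{identities}.
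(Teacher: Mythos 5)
Your proof is correct, and it takes a genuinely different and more direct route than the paper's. The paper establishes condition \eqref{cond2} by contradiction: it builds the period-doubled extensions $\ov h_k$ of $h_{n_0}$, shows that any failure point $t_k$ must accumulate at the right endpoint $2^k n_0 T$, and derives a contradiction with a local estimate near that endpoint; it then propagates the conclusion from $n^*$ to all $n\ge n^*$ via the auxiliary function $\tilde h_{n^*}$. You instead observe that, after the substitution $P_2(s)=Q(s)+\tfrac{P_2(T)}{T}s$ with $Q$ being $\CC^1$ and $T$-periodic (this is exactly where $\ov p=0$ enters), the left-hand side of \eqref{cond2} equals $T\,|g(t)|$ with $g(0)=g(nT)=0$ and $|g'|\le L$, so the mean value theorem gives the endpoint-linear bound $|g(t)|\le L\min\{t,nT-t\}$, which is dominated by the quadratic $\tfrac12 t(nT-t)$ once $n>4L/T$; this handles precisely the delicate endpoint behaviour that forces the paper into its indirect argument. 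Your approach is essentially the strategy the paper reserves for its second proposition (the $L^{\infty}$ case, via the derivative bound on $f$), but you correctly note that the only quantity needed is $L=\|P_1-\tfrac{P_2(T)}{T}\|_{\infty}$, which is finite for any Lebesgue integrable $p$ because $P_1$ is then continuous and $T$-periodic; so your argument is shorter, yields the explicit threshold $n^{*}>4L/T$, and subsumes both of the paper's propositions at once.
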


\begin{proof}
	From Remark \ref{identities} we have that $ TP_1(\phi_0)-P_2(T) $ is continuous $ T $- periodic in $ \phi_0 $ and consequently bounded, thus there exists $ n_0\in\N $ such that the relationship \eqref{cond1} holds  for every $ n\geq n_0 $. 

In order to obtain \eqref{cond2}, we define the functions 
	\begin{equation}\label{curveshn}
	\begin{aligned}
		f(t):=&t P_2(T)+T P_2(\phi_0)- T P_2(t+\phi_0) \,\, \text{and} \\
		 h_n(t):=&\dfrac{T}{2}t(nT-t),\,\, \text{for}\,\, n\in\N.
		\end{aligned}
	\end{equation}
By Remark \ref{identities}, we notice that $ f $ is continuously differentiable and $ T $-periodic. Besides that, $ h_n(t)>0 $ for $ t\in(0,nT) $. 

We  start by  proving the following claim:

\smallskip

\noindent\textbf{Claim 1.} {\it There exists $ n^{*}\geq n_0 $ such that 
\begin{equation}\label{claim1}
	|f(t)|<h_{n^{*}}(t) \;\;\;  \forall t\in(0,n^{*}T).
\end{equation} 
}

\smallskip

Consider the functions
\[
q_n^{+}(t):=f(t)-h_n(t)\;\;\; \text{and} \;\;\; q_n^{-}(t):=f(t)+h_n(t).
\]
Since,  $ q_{n_0}^{+}(nT) =q_{n_0}^{-}(nT)=0$ , $ (q_{n_0}^{+} ) '(n_0T)>0 $ and $ (q_{n_0}^{-} ) '(n_0T)<0 $, then there exists $ \e_0>0 $ such that $q_{n_0}^+(t)>0$ and $q_{n_0}^-(t)<0$ for every $t\in(n_0T-\e_0,n_0T)$. This implies that
\begin{equation}\label{eps0}
	|f(t)|<h_{n_0}(t) \;\;\; \forall t\in(n_0T-\e_0,n_0T).
\end{equation}

Now, for each $k\in\N$, consider the following $(2^k n_0 T)$-periodic function: \begin{equation}\label{eq:hext}
	\ov h _k(t):=\sum_{m\in\N}\chi_{I_m}(t)h_{2^{k}n_0}(t-2^{k}(m-1)T),
\end{equation}
where $ I_m:=\left.\left[2^{k}(m-1)T,2^{k}mT\right.\right) $ and $ \chi_{I_m} $ is the characteristic function of $  I_m $. Notice that $\ov h _0(t)$ is a $n_0 T$-periodic extension of $ h_{n_0}$ and that $ \ov h _{k+1}(t)\geq\ov h _{k}(t)  $ for every $t \geq 0$ and  $k\in\N$ (see Figure \ref{curveshk}). Taking \eqref{eps0} into account, this implies that 
\begin{equation}\label{gen}
		|f(t)|<	\ov h _k(t) \;\;\; \forall t\in(2^{k}n_0T-\e_0,2^{k}n_0T),
\end{equation}
for every $ k\in\N$.
\begin{figure}[h]
	\begin{center}
\begin{overpic}[scale=0.4]{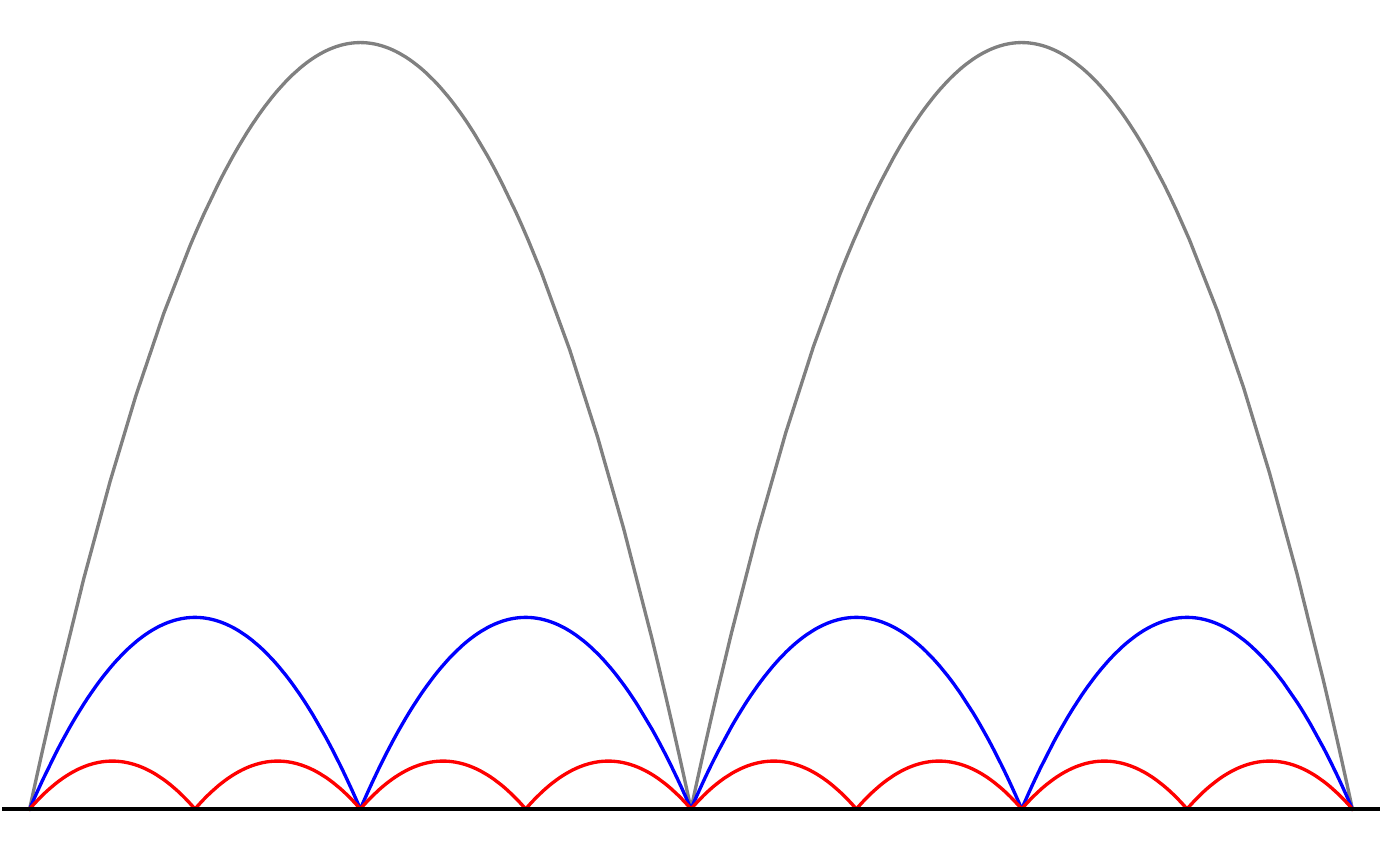}
%					\begin{overpic}[grid,tics=5,scale=0.50]{Curveshk}
			\put(14,8){{ $ \ov h _0 $}}
			\put(14,18){{ $ \ov h _1 $}}
			\put(14,54.5){{ $ \ov h _2 $}}
			\put(0,-0.4){{ $0$}}
			\put(22.5,-0.4){{ $ 2n_0 T$}}
			\put(46,-0.4){{ $ 4n_0 T$}}
				\put(70,-0.4){{ $ 6n_0 T$}}
			\put(95,-0.4){{ $ 8n_0 T$}}
		\end{overpic}		
	\end{center}
	\caption{Graphs of the functions $ \ov h _k $ constructed in \eqref{eq:hext} for $ k=0,1,2 $.}
	\label{curveshk}
	\smallskip
\end{figure}

 On the other hand, from Remark \ref{identities}, it follows that $ P_1 $ is bounded, so let $ M>0 $ satisfy $ \|P_1\|_{\infty}=M $. Then, 
\begin{equation}\label{boundf1}
	|f(t)|\leq t\int_{0}^{T}|P_1(s)|\d s +T\int_{0}^{t}|P_1(s+\phi_0)|\d s\leq 2 T M t, \,\,\forall\,\, t\in\R.
\end{equation}
Consequently,  
\begin{equation}\label{tk}
	|f(t)|<\ov h _{k}(t) \; \; \; \text{for all} \;\;\; t\in(0,2^{k}n_0T-4M).
\end{equation}
In addition, since $f(t)$ is $T$-periodic, \eqref{boundf1} also implies that
\begin{equation}\label{boundf2}
|f(t)|\leq 2T^2M\,\,\text{ for every }\,\, t\in\R.
\end{equation}

Assume, by reduction to absurdity, that \eqref{claim1} does not hold. In particular, for each $ k\in\N $, there exists $ t_k\in(0,2^{k}n_0T) $ such that  $|f(t_k)|=\ov h _{k}(t_k).$
 From \eqref{tk}, it follows that $ t_k\in[2^{k}n_0T-4M,2^{k}n_0T) $, which means that $ t_k\to\infty $ when ${ k\to\infty }$. Moreover, from \eqref{boundf2}, one has
\[
|t_k-2^{k}n_0T|=\dfrac{\ov h _{k}(t_k)}{|t_k|}=\dfrac{|f(t_k)|}{|t_k|}\leq \dfrac{2T^{2}M}{|t_k|}\to 0, \;\;\; \text{when} \;\;\; k\to \infty.
\]
Then, there exists $ k_0 \in\N$ such that
$ |t_k-2^{k}n_0T|<\e_0 $ for every $ k\geq k_0 $, with $ \e_0 >0$ being the one satisfying \eqref{eps0}. This is contradiction with \eqref{gen}. 

Thus, there must exists $ k^{*}\in\N$, for which $ |f(t)|<	\ov h _{k^{*}}(t) $ for all $ t\in(0,2^{k^{*}}n_0T) $. Hence, Claim 1 follows by defining $ {n^{*}=2^{k^{*}}n_0 }$.

Finally, the proof of the proposition will follow by proving the  next claim:

\bigskip

\noindent\textbf{Claim 2.} {\it  Condition \eqref{cond2} holds for every $ n\geq n^{*} $.}

\smallskip

We notice that 
\[
|f(t)|< h _{n^{*}}(t-T) \;\;\; \text{for all} \;\;\; t\in(T,(n^{*}+1)T).
\]
Taking into account that $h_{n^{*}}(t)$ and $h_{n^{*}}(t-T)$ coincide for $t=(n^{*}+1)T/2$, we can define
\[
\tilde h_{n^{*}}(t):=\left\{\begin{aligned}
h _{n^{*}}(t) \;\; &\text{if} \;\; t\in[0,\tfrac{(n^{*}+1)T}{2}],\\
h _{n^{*}}(t-T) \;\; &\text{if} \;\; t\in(\tfrac{(n^{*}+1)T}{2},(n^{*}+1)T].
\end{aligned}\right.
\]

Now, since $  h _{n^{*}}(t)\leq  h _{n^{*}+1}(t) $ for all $ t\in(0,n^{*}T) $ and $  h _{n^{*}}(t-
T)\leq  h _{n^{*}+1}(t) $ for all $ t\in(T,(n^{*}+1)T) $, we have that
$ |f(t)|< \tilde h_{n^{*}}(t)\leq h_{n^{*}+1}(t)$ for all $ t\in(0,(n^{*}+1)T) $. The proof of the claim follows, then, by repeating this procedure recursively. 

\smallskip

Hence, Lemma \ref{fundamentallemma} ensures that, for each $ n\geq n^{*} $, $\mathcal{T}_n $ is an invariant torus of \eqref{s1}. 
\end{proof}

\begin{corollary}\label{cor:trans-unique}
	Let $p(t)$ be a Lebesgue integrable function with vanishing average. Then, all the maximal solutions of \eqref{s1} are defined for every $t\in\R$ and the ones whose initial conditions lie on $(\mathbb{S}^1\times\R^2)\setminus\inte(\mathbb{T}_1)$ are unique and transversal to $\Sigma$.
\end{corollary}

\begin{proof}
We start by proving that, for each $(\phi_0,x_0,y_0)\in(\mathbb{S}^1\times\R^2)\setminus\inte(\mathbb{T}_1)$, there exists a unique maximal solution passing through $(\phi_0,x_0,y_0)$ which is transversal to $\Sigma$ and it is defined for  every $t\in\R$.
 
Notice that a maximal solution with initial condition $(\phi_0,x_0,y_0)$ may intersect the plane $\Sigma$ or not. If such an intersection does not occur, then such a solution is unique as a maximal solution of one the differential systems in \eqref{s1lateral}.  On the other hand, if $\varphi(t;\phi_0,x_0,y_0)$ intersects $\Sigma$, then such an intersection must be transversal. Otherwise, there would exist a time $t^{*}>0$  such that
 \[
 \varphi_2(t^{*};\phi_0,x_0,y_0)=0 \quad \text{and} \quad \varphi_3(t^{*};\phi_0,x_0,y_0)=\frac{\d}{\d t}\varphi_2(t^{*};\phi_0,x_0,y_0)=0,
 \]
which implies that such a solution would cross $\mathbb{T}_1$ (from outside to inside) contradicting the fact that $\mathbb{T}_1 $ is invariant and that the solutions there defined are unique. Thus, as noticed in Section \ref{sec:eus}, this transversality implies the uniqueness of all solutions starting in $(\mathbb{S}^1\times\R^2)\setminus\inte(\mathbb{T}_1)$ which, therefore, are defined for every $t\in\R$.

For the remaining initial conditions, as a consequence of the invariance of $\mathbb{T}_1$, the maximal solutions starting in the set $\inte(\mathbb{T}_1)$ are confined in the compact set $\ov{\inte(\mathbb{T}_1)}$. Therefore, they must be defined for all $t\in\R$ (see \cite[Theorem 9 \S 7]{Filippov1988}). This concludes the proof of the corollary.
\end{proof}

\subsection{A simpler approach for $L^{\infty}$-forcing term}

In the next result, we shall see that the proof of Proposition \ref{fundprop} simplifies a lot by assuming $p$ to be an $ L^{\infty} $-function on $ [0,T] $, instead of just Lebesgue integrable. In this case, we will show that the surface $ \mathcal{T}_n $ provided in \eqref{inv.cylinders} is an invariant torus of \eqref{s1} for every $ n\in\N $ bigger than $ 2 \|p\|_{L^{\infty}} $.
\begin{proposition}
	Let $  p $ be a $ T $-periodic function with vanishing average and suppose that there exists $ M>0 $ such that ${ \|p\|_{L^{\infty}}<M }$. Then, the surface $ \mathcal{T}_n $ is an invariant torus of \eqref{s1} for all $ n\in\N $ satisfying $ n\geq 2 M $.
\end{proposition}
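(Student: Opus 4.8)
The plan is to apply the Fundamental Lemma \ref{fundamentallemma}: it suffices to verify that conditions \eqref{cond1} and \eqref{cond2} hold for every $\phi_0\in[0,T]$ whenever $n\in\N$ satisfies $n\geq M$. I would keep the notation of the proof of Proposition \ref{fundprop}, writing $f$ and $h_n$ as in \eqref{curveshn}, and I would also use $g_{\phi_0}(s):=f'(s)=P_2(T)-TP_1(s+\phi_0)$. Since $\ov p=0$, Remark \ref{identities} makes $P_1$ be $T$-periodic, so $g_{\phi_0}$ is $T$-periodic, and a short computation using that periodicity shows $g_{\phi_0}$ has vanishing average over a period; hence $f(t)=\int_0^t g_{\phi_0}$ is $T$-periodic with $f(kT)=0$ for all $k\in\mathbb{Z}$. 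The new ingredient compared with the Lebesgue-integrable case is that $g_{\phi_0}'=-Tp(\cdot+\phi_0)$ is bounded, so $g_{\phi_0}$ is Lipschitz with constant at most $T\|p\|_{L^{\infty}}$.

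The crux is the uniform estimate $\|g_{\phi_0}\|_\infty\leq\frac{T^2}{4}\|p\|_{L^{\infty}}<\frac{MT^2}{4}$, which I would prove by averaging the identity $g_{\phi_0}(s)=\frac1T\int_{s-T/2}^{s+T/2}\big(g_{\phi_0}(s)-g_{\phi_0}(u)\big)\,\d u$ (legitimate because $g_{\phi_0}$ is $T$-periodic with zero mean) and bounding the integrand by the Lipschitz constant, so that $|g_{\phi_0}(s)|\leq\|p\|_{L^{\infty}}\int_{-T/2}^{T/2}|w|\,\d w=\frac{T^2}{4}\|p\|_{L^{\infty}}$. Since $f$ vanishes at $0$ and $T$ and $f'=g_{\phi_0}$, one gets $|f(t)|\leq\|g_{\phi_0}\|_\infty\min(t,T-t)$ on $[0,T]$, hence $\|f\|_\infty\leq\frac T2\|g_{\phi_0}\|_\infty<\frac{MT^3}{8}$. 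Condition \eqref{cond1} is then immediate, since its left-hand side equals $|g_{\phi_0}(0)|\leq\|g_{\phi_0}\|_\infty<\frac{MT^2}{4}\leq\frac{nT^2}{4}<\frac{nT^2}{2}$ for every $n\geq M$.

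For condition \eqref{cond2} I would show that $Q_n^{\pm}:=h_n\mp f$ is strictly positive on $(0,nT)$; note $Q_n^{\pm}(0)=Q_n^{\pm}(nT)=0$ and $(Q_n^{\pm})'=h_n'\mp g_{\phi_0}$. For $n\geq2$: on $[0,\frac{nT}{4})$ and on $(\frac{3nT}{4},nT]$ one has $|h_n'(t)|=\frac T2|nT-2t|>\frac{nT^2}{4}$, while $\|g_{\phi_0}\|_\infty<\frac{nT^2}{4}$, so $Q_n^{\pm}$ is strictly monotone on $[0,\frac{nT}{4}]$ and on $[\frac{3nT}{4},nT]$ and therefore positive on $(0,\frac{nT}{4}]\cup[\frac{3nT}{4},nT)$; on the central interval, concavity of $h_n$ gives $h_n(t)\geq h_n(\frac{nT}{4})=\frac{3n^2T^3}{32}>\frac{nT^3}{8}\geq\frac{MT^3}{8}>\|f\|_\infty$ (the inequality $3n^2/32>n/8$ holds exactly for $n\geq2$), so $Q_n^{\pm}(t)\geq h_n(t)-\|f\|_\infty>0$ there as well. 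The only value $n\geq M$ not covered is $n=1$, which forces $M\leq1$, hence $\|p\|_{L^{\infty}}<1$ and $\|g_{\phi_0}\|_\infty<\frac{T^2}{4}$; then for $t\in(0,T)$, using $\min(t,T-t)\leq\frac2T\,t(T-t)$,
\[
|f(t)|\leq\|g_{\phi_0}\|_\infty\min(t,T-t)\leq\frac{2\|g_{\phi_0}\|_\infty}{T}\,t(T-t)<\frac T2\,t(T-t)=h_1(t).
\]
With \eqref{cond1} and \eqref{cond2} established, the Fundamental Lemma gives that $\mathcal{T}_n$ is an invariant torus of \eqref{s1} for every $n\geq M$.

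The step I expect to be the real obstacle is the estimate on $\|g_{\phi_0}\|_\infty$: the bound obtained without exploiting the vanishing average, namely $\|g_{\phi_0}\|_\infty\leq\frac{T^2}{2}\|p\|_{L^{\infty}}$, is off by a factor of $2$ and is too weak to force \eqref{cond2} for $n$ as small as $M$; the centered-window averaging is exactly what restores the needed sharpness. A minor additional point is the regime of small $n$ (effectively only $n=1$), where $h_n$ is no longer large in the middle of its support, so the comparison with $\|f\|_\infty$ must be replaced by the direct pointwise estimate above (equivalently, by the strict concavity of $Q_1^{\pm}$ on $[0,T]$, which holds because $\|p\|_{L^{\infty}}<1$).
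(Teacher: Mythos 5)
Your proof is correct, and while it follows the same overall strategy as the paper (verify \eqref{cond1} and \eqref{cond2} of Lemma \ref{fundamentallemma} by controlling $f'(s)=P_2(T)-TP_1(s+\phi_0)$ uniformly), the key estimate is obtained by a genuinely different and in fact sharper argument. The paper bounds $|P_1(\phi_0)-P_2(T)/T|$ directly by $\frac{M}{T}\int_0^T|\phi_0-t|\,\mathrm{d}t$, which at the worst $\phi_0$ (namely $\phi_0=0$ or $T$) equals $\frac{MT}{2}$, not the $\frac{MT}{4}$-type bound that its subsequent step actually uses: the claim that \eqref{cond1} alone forces $(r_n^+)'<0$ requires $|f'|<\frac{nT^2}{4}$, whereas \eqref{cond1} only gives $|f'|<\frac{nT^2}{2}$. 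Your centered-window averaging of the zero-mean, $T\|p\|_{L^\infty}$-Lipschitz function $g_{\phi_0}$ delivers exactly the uniform bound $\|g_{\phi_0}\|_\infty\leq\frac{T^2}{4}\|p\|_{L^\infty}$ that makes the threshold $n\geq M$ work, so your proof repairs a real soft spot in the paper's estimate rather than merely rederiving it. Your treatment of \eqref{cond2} is more elaborate than necessary, though: once you have $\|f'\|_\infty<\frac{nT^2}{4}$ and $f(0)=f(nT)=0$, the single inequality $|f(t)|\leq\|f'\|_\infty\min(t,nT-t)<\frac{nT^2}{4}\min(t,nT-t)\leq h_n(t)$ handles all $t\in(0,nT)$ and all $n\geq M$ at once (this is essentially the paper's tent-function comparison with $d_n$ and $e_n$), making the quarter-interval monotonicity argument, the concavity comparison on the middle half, and the separate $n=1$ case avoidable. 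None of that affects correctness.
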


\begin{proof}
	We recall that in order to obtain this result, it is sufficient to show that conditions \eqref{cond1} and \eqref{cond2} hold for all $ n\in\N $ such that $ n\geq 2 M $. 
	
Define $\alpha(\phi_0):=T P_1(\phi_0)-P_2(T)$. Notice that $ \alpha $ is $ T $-periodic by Remark \ref{identities}, which restricts our analysis to $ \phi_0\in[0,T] $. Then, taking into account that $ \|p\|_{L^{\infty}}<M $, we see that, for $ \phi_0\in[0,T] $,
	\begin{equation}\label{alpha}
	\begin{aligned}
		|\alpha(\phi_0)|&=\left|\int_{0}^{T}P_1(\phi_0)-P_1(t)\d t\right|\\
		&\leq \int_{0}^{T}\|p\|_{L^{\infty}}|\phi_0-t|\d t\\
		&<M\int_{0}^{T}|\phi_0-t|\d t=M\left(\frac{T^{2}}{2}- T \phi_0  +\phi_0^{2}\right)\leq M\dfrac{ T^2}{2}\leq \dfrac{ n T^2}{4},
	\end{aligned}
	\end{equation}
whenever $ n\in\N $ satisfies $ n\geq 2M $. Therefore, condition \eqref{cond1} holds for every $ n\in\N $ satisfying $ n\geq 2 M $. 

In order to obtain \eqref{cond2}, we define the functions
\[
d_n(t):=\frac{nT^{2}}{4}t\;\;\; \text{and} \;\;\; e_n(t):=-\frac{nT^{2}}{4}(t-nT).
\]
We notice that $ d_n(t)>0 $ for $ t>0 $;  $ e_n(t)>0 $ for $ t<nT $; and $ d_n(t)=  e_n(t)$ if, and only if $ t=\tfrac{nT}{2} $. Consider the functions
\[
r^{+}_n(t):=f(t)-d_n(t)\;\;\; \text{and} \;\;\; s^{+}_n(t):= f(t)-e_n(t),
\]
 where $  f(t) $ is the function defined in \eqref{curveshn}. We observe that $r^{+}_n(0)=0 $ and $ s^{+}_n(nT)=0$. In addition, given that $ \alpha(t+\phi_0)=- f'(t) $ for every $ t\in \R $, and taking \eqref{alpha} into account, it follows that 
\[
(r^{+}_n)'(t)<0 \;\;\; \text{and} \;\;\;(s^{+}_n)'(t)>0 \;\;\; \text{for all} \;\;\; t\in \R,
\]
and $ n\geq 2M $. This means that $f(t)<d_n(t) $ for all $ t>0 $, and $ f(t)<e_n(t) $ for all $ t<nT $. Thus, by defining the function 
\[
g^{+}_n(t):=\left\{\begin{aligned}
d_n(t) \;\; &\text{if} \;\; t\in(0,\tfrac{nT}{2}],\\
e_n(t) \;\; &\text{if} \;\; t\in(\tfrac{nT}{2},T),
\end{aligned}\right.
\]
and taking into account that $ g^{+}_n(t)\leq h_n(t) $\footnote{There is a typo in the definition of the function $ g^{+}_n(t) $ in the published paper. Such typo does not compromise the accuracy of the result.} for all $ t\in(0,nT) $, where $ h_n$ is the function defined in \eqref{curveshn}, it follows that $ f(t)< h_n(t) $ for all $ t\in(0,nT) $ and $n\geq 2M$.  In  an analogous way, we can show that $  - h_n(t)<f(t) $ for all $ t\in(0,nT) $ and $ n\geq 2M $. It concludes  this proof.
\end{proof}

\section*{Declarations}

\subsection*{Ethics approval and consent to participate} Not applicable
\subsection*{Consent for publication} Not applicable
\subsection*{Availability of data and materials} Data sharing not applicable to this article as no datasets were generated or analyzed during the current study.
\subsection*{Competing interests} To the best of our knowledge, no conflict of interest, financial or other, exists.
\subsection*{Funding} DDN is partially supported by S\~{a}o Paulo Research Foundation (FAPESP) grants 2022/09633-5,  2019/10269-3, and 2018/13481-0, and by Conselho Nacional de Desenvolvimento Cient\'{i}fico e Tecnol\'{o}gico (CNPq) grant 309110/2021-1. LVMFS is partially supported by S\~{a}o Paulo Research Foundation (FAPESP) grants 2018/22398-0 and 2021/11515-8.
\subsection*{Authors' contributions} All persons who meet authorship criteria are listed as authors, and all authors certify that they have participated sufficiently in the work to take public responsibility for the content, including participation in the conceptualization, methodology, formal analysis, investigation, writing-original draft preparation and writing-review \& editing.
\subsection*{Acknowledgements} Not applicable

\bibliographystyle{abbrv}
\bibliography{references}

\end{document}